\begin{document}
\theoremstyle{plain}
\newtheorem{Thm}{Theorem}
\newtheorem{Cor}{Corollary}
\newtheorem{Ex}{Example}
\newtheorem{Con}{Conjecture}
\newtheorem{Main}{Main Theorem}
\newtheorem{Lem}{Lemma}
\newtheorem{Prop}{Proposition}
\newtheorem{Pfm}{Theorem}

\theoremstyle{definition}
\newtheorem{Def}{Definition}
\newtheorem{Note}{Note}

\theoremstyle{remark}
\newtheorem{notation}{Notation}
\renewcommand{\thenotation}{}
\setcounter{tocdepth}{1}
\errorcontextlines=0
\numberwithin{equation}{section}
\renewcommand{\rm}{\normalshape}%

\title[Hopf hypersurfaces]%
   {Hopf hypersurfaces in spaces \\ of oriented geodesics}

\author{Nikos Georgiou}
\address{Nikos Georgiou\\
          Department of Mathematics\\
          Waterford Institute of Technology\\
          Waterford\\
          Co. Waterford\\
          Ireland.}
\email{ngeorgiou@wit.ie}
\author{Brendan Guilfoyle}
\address{Brendan Guilfoyle\\
          School of Science, Technology, Engineering and Mathematics,\\
          Institute of Technology, Tralee\\
          Clash\\
          Tralee\\
          Co. Kerry\\
          Ireland.}
\email{brendan.guilfoyle@ittralee.ie}

\keywords{Hopf hypersurfaces, space of oriented geodesics, hyperbolic $n$-space, $n$-sphere, spaces of constant curvature}
\subjclass{Primary: 53A35; Secondary: 57N13}

\begin{abstract}
A Hopf hypersurface in a (para-)Kaehler manifold is a real hypersurface for which one of the principal directions of the second fundamental form is the 
(para-)complex dual of the normal vector. 

We consider particular Hopf hypersurfaces in the space of oriented geodesics of a non-flat space form of 
dimension greater than 2. For spherical and hyperbolic space forms, the oriented geodesic space admits a canonical Kaehler-Einstein and 
para-Kaehler-Einstein  structure, respectively, so that a natural notion of a Hopf hypersurface exists. 

The particular hypersurfaces considered are formed by the oriented geodesics that are tangent to a given convex hypersurface in the underlying 
space form. We prove that a tangent hypersurface is Hopf in the space of oriented geodesics with respect to this 
canonical (para-)Kaehler structure iff the underlying convex hypersurface is totally umbilic and non-flat.

In the case of 3 dimensional space forms, however, there exists a second canonical complex structure which can also be used to define 
Hopf hypersurfaces. We prove that 
in this dimension, the tangent hypersurface of a convex hypersurface in the space form is always Hopf with respect to this second complex structure.

\end{abstract}

\date{23rd August 2016}

\maketitle
\tableofcontents

\section{Background and Results}

Submanifold theory and in particular the study of real hypersurfaces in a complex manifold, has been of great interest for the last decades  
(for further study see \cite{CM} and \cite{NR}). Let $(M,g,J)$ be a K\"ahler structure, where $M$ is a $2n$-real dimensional manifold, $g$ stands 
for the pseudo-Riemannian metric and $J$ denotes either a complex or paracomplex structure. If $\Sigma$ is a non-degenerate real hypersurface of 
$M$ then there exists a unit normal vector field $N$ along $\Sigma$.  The {\it structure vector field} of $\Sigma$ is the tangential vector field 
$\xi$ given by $\xi:=-J N$. A {\it Hopf hypersurface} is a real hypersurface in a K\"ahler manifold whose structure vector field is a principal direction. 

The principal curvature associated to the structure vector field is called a {\it Hopf principal curvature}. For Riemannian complex space forms, 
Madea in \cite{Ma}, Ki and Suh in \cite{KS}, have proved that the Hopf principal curvature in a Hopf hypersurface must be constant. The same 
statement for pseudo-Riemannian complex space forms and for paracomplex space forms has been proved recently by Anciaux and Panagiotidou in \cite{AK}. 
Furthermore, depending on the size of the Hopf principal curvature, a local characterization of Hopf hypersurfaces is obtained in complex space forms 
\cite{AK} \cite{CR} \cite{Mo}.

The space ${\mathbb L}({\mathbb S}_{\epsilon}^{n+1})$ of oriented geodesics of a real space form ${\mathbb S}_{\epsilon}^{n+1}$ provides a new class 
of (para-) complex manifolds for $n\geq2$. Here ${\mathbb S}_{\epsilon}^{n+1}$ is the round $(n+1)$-sphere ${\mathbb S}^{n+1}$ when $\epsilon=1$ while, 
for $\epsilon=-1$ the real space form ${\mathbb S}_{\epsilon}^{n+1}$ is the hyperbolic $(n+1)$-space ${\mathbb H}^{n+1}$. 

In particular, ${\mathbb L}({\mathbb S}_1^{n+1})$ admits a canonical K\"ahler structure 
$({\mathbb J},{\mathbb G})$, where ${\mathbb J}$ is a complex structure and ${\mathbb L}({\mathbb S}_{-1}^{n+1})$ admits a canonical para-K\"ahler 
structure which will be also denoted by 
$({\mathbb J},{\mathbb G})$. 

In both cases, the metric ${\mathbb G}$ is Einstein and together with ${\mathbb J}$ are both invariant under 
the natural action of the group of isometries of ${\mathbb S}_{\epsilon}^{n+1}$ (see \cite{agk} and \cite{An}). 
The relation between submanifold theory of ${\mathbb S}_{\epsilon}^{n+1}$ and ${\mathbb L}({\mathbb S}_{\epsilon}^{n+1})$ has  been explored by 
several authors recently (see \cite{An} \cite{CU} \cite{gag1} \cite{gag2} and \cite{gk1}). For example, the Gauss map of hypersurfaces in 
${\mathbb S}_{\epsilon}^{n+1}$ correspond to Lagrangian submanifolds in ${\mathbb L}({\mathbb S}_{\epsilon}^{n+1})$. 

The purpose of this paper is to study hypersurfaces in ${\mathbb L}({\mathbb S}_{\epsilon}^{n+1})$ that are formed by the oriented geodesics tangent 
to a submanifold in ${\mathbb S}_{\epsilon}^{n+1}$, called {\it tangent hypersurfaces}. These hypersurfaces were introduced in \cite{gk2} and further 
explored in \cite{gg1}.

In particular, we study  Hopf tangent hypersurfaces in $({\mathbb L}({\mathbb S}_{\epsilon}^{n+1}),{\mathbb J},{\mathbb G})$ and we prove the following:
\begin{Thm}\label{t:secondmaintheorem} 
The tangent hypersurface ${\mathcal H}(\Sigma)$ of an $n$-dimensional submanifold $\Sigma\subset {\mathbb S}^{n+1}$ (resp. hyperbolic space 
${\mathbb H}^{n+1}$) for $n\geq2$ is a Hopf hypersurface of $({\mathbb L}({\mathbb S}^{n+1}),{\mathbb J},{\mathbb G})$ (resp. 
$({\mathbb L}({\mathbb H}^{n+1}),{\mathbb J},{\mathbb G})$) iff it is totally umbilic and non-flat. 
\end{Thm}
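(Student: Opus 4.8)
The computations will be carried out in the Jacobi field model for $T\mathbb L(\mathbb S^{n+1}_\epsilon)$: a tangent vector at a unit-speed geodesic $\gamma$ is a normal Jacobi field $Y$ along $\gamma$, determined by the pair $\big(Y(0),(\nabla_{\dot\gamma}Y)(0)\big)\in\dot\gamma(0)^\perp\oplus\dot\gamma(0)^\perp$, in which $\mathbb J$ and $\mathbb G$ take their explicit constant-curvature form. The plan is to parametrize $\mathcal H(\Sigma)$ by the unit tangent bundle $U\Sigma$ via $(p,v)\mapsto\gamma_{p,v}$ (the geodesic of $\mathbb S^{n+1}_\epsilon$ through $p$ with velocity $v$), to identify the structure field $\xi$ along it, and then to test when $\xi$ is principal.

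First I would differentiate a curve $(p(s),v(s))$ in $U\Sigma$: the variation field of $s\mapsto\gamma_{p(s),v(s)}$ is the Jacobi field with data $\big(\dot p-\langle\dot p,v\rangle v,\ \nabla_s v\big)$, while the requirement that $v(s)$ remain tangent to $\Sigma$ reads $\langle\nabla_sv,\nu\rangle=\langle Sv,\dot p\rangle$, with $S$ and $\nu$ the shape operator and unit normal of $\Sigma$. Assuming $\Sigma$ convex --- so $\langle Sv,v\rangle\neq0$, which also makes the parametrization an immersion --- this identifies $T_{\gamma_{p,v}}\mathcal H(\Sigma)$ with the pairs $(A,B)$ for which $A\in T_p\Sigma\cap v^\perp$ and $B\in v^\perp$ is arbitrary. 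Taking the $\mathbb G$-orthogonal complement exhibits the unit normal $N$ as the pair $(\nu,0)$ (up to normalization) and hence the structure field $\xi=-\mathbb JN$ as $(0,\pm\nu)$; in particular $\mathbb G(\xi,\xi)\neq0$, so the Hopf condition is well posed.

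Next I would reduce the Hopf condition to a geometric statement. Since $\xi=-\mathbb JN$ and the Levi-Civita connection $\bar\nabla$ of $\mathbb G$ satisfies $\bar\nabla\mathbb J=0$, one has $\bar\nabla_\xi\xi=\mathbb J(\mathcal A\xi)$, with $\mathcal A$ the shape operator of $\mathcal H(\Sigma)$; hence $\mathcal H(\Sigma)$ is Hopf, i.e.\ $\mathcal A\xi$ is proportional to $\xi$, if and only if $\bar\nabla_\xi\xi$ is normal to $\mathcal H(\Sigma)$, i.e.\ the integral curves of $\xi$ are geodesics of $\big(\mathcal H(\Sigma),\mathbb G|_{\mathcal H(\Sigma)}\big)$. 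Unwinding the parametrization, the integral curve of $\xi$ through $\gamma_{p,v}$ projects under $U\Sigma\to\Sigma$ to the geodesic of $(\Sigma,g_\Sigma)$ issuing from $p$ in the direction $v$, lifted to $\mathbb L$ by $c\mapsto\gamma_{c,\dot c}$; so $\mathcal H(\Sigma)$ is Hopf precisely when every such tangent ruling along a geodesic of $\Sigma$ is a geodesic of $\mathcal H(\Sigma)$.

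The remaining and main step is to compute $\mathcal A\xi$ (equivalently $\bar\nabla_\xi\xi$), using the second fundamental form of the tangent hypersurface (available from \cite{gg1}, or computed directly in the Jacobi field model) together with the Gauss and Codazzi equations of $\Sigma\subset\mathbb S^{n+1}_\epsilon$. I expect the component of $\mathcal A\xi$ transverse to $\xi$ to be a nonzero universal multiple of a vector built from $Sv-\langle Sv,v\rangle v$, which measures the failure of $v$ to be a principal direction of $\Sigma$, while the $\xi$-component carries a scalar factor that, for umbilic $\Sigma$, reduces to the Gauss curvature $\epsilon+\lambda^2$. Imposing $\mathcal A\xi\parallel\xi$ for every $p\in\Sigma$ and every unit $v\in T_p\Sigma$ then forces $Sv\parallel v$ for all $v$, hence $S$ is pointwise a multiple of the identity; so $\Sigma$ is totally umbilic, with $\lambda$ constant by Codazzi since $n\geq2$, and the Hopf principal curvature is finite exactly when $\epsilon+\lambda^2\neq0$ --- automatic for $\epsilon=1$, and for $\epsilon=-1$ excluding precisely the horospheres. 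Conversely, for $\Sigma$ totally umbilic and non-flat the same computation gives $\mathcal A\xi=\mu\xi$ with $\mu$ the resulting constant, so $\mathcal H(\Sigma)$ is Hopf. The delicate point is arranging the connection computation --- which couples $S$, $\nabla S$ and the ambient curvature over a $(2n-1)$-dimensional tangent space --- so that umbilicity and non-flatness come out transparently; working at each $\gamma_{p,v}$ in a frame diagonalizing $S$, with $v^\perp$ split as $(T_p\Sigma\cap v^\perp)\oplus\mathbb R\nu$, should keep this tractable.
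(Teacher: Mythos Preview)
Your plan is sound and would deliver the theorem, but it follows a genuinely different route from the paper. The paper works entirely in the bivector model $\mathbb L(\mathbb S^{n+1}_\epsilon)\subset\Lambda^2(\mathbb R^{n+2})$: it parametrizes $\mathcal H(\Sigma)$ explicitly as $\bar\phi(x,\theta)=\phi(x)\wedge v(x,\theta)$ with $v$ written in spherical angles on $\mathbb S^{n-1}$ (this is the sole purpose of Lemma~\ref{l:lem}), computes the shape operator on the coordinate frame $\{d\bar\phi(e_k),\,d\bar\phi(\partial_{\theta_k})\}$ using the flat connection of $\Lambda^2(\mathbb R^{n+2})$, and then simply writes $\xi=\phi\wedge N$ as a linear combination of frame vectors and applies $A$. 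There is no intermediate reduction to ``integral curves of $\xi$ are geodesics of $\mathcal H(\Sigma)$''. Your Jacobi-field model and the geodesic reformulation are cleaner conceptually and spare you the spherical-coordinate bookkeeping; the paper's approach is more mechanical but exploits the flat ambient connection so that no Codazzi or curvature identities are ever invoked.

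Your key expectation is exactly right and matches the paper's output: in their notation the obstruction to $A\xi\parallel\xi$ is $\sum_{k,l}\lambda_k g_{nk}g^{kl}\,v_l\wedge v$, and since $g_{nk}=\langle v,e_k\rangle$ and $g^{kl}=\langle e_k,v_l\rangle$ this equals $(Sv-\langle Sv,v\rangle v)\wedge v$, vanishing for every unit $v$ iff $\Sigma$ is umbilic. The one place your guess does not match the paper is the scalar factor: the paper obtains $A\xi=-\lambda^{-1}\xi$ in the umbilic case, so the Hopf principal curvature is $-\lambda^{-1}$, not the Gauss curvature $\epsilon+\lambda^2$. Accordingly, the ``non-flat'' hypothesis in the paper's argument functions only as $\lambda\neq0$ (equivalently $\langle Sv,v\rangle\neq0$), which is precisely what makes the parametrization an immersion and lets one solve for $\xi$ in the frame; your proposed exclusion of horospheres via $\epsilon+\lambda^2\neq0$ is not what the paper's computation actually uses. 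If you carry out your connection computation you will see the $\epsilon+\lambda^2$ factor does not appear.
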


\vspace{0.1in}

In 3 dimensions, the space ${\mathbb L}({\mathbb S}_{\epsilon}^3)$ admits a second canonical complex structure, ${\mathbb J}'$, which is also 
invariant under the 
natural action of the group of isometries of ${\mathbb S}_{\epsilon}^3$. Using ${\mathbb J}'$ it is possible to obtain another invariant metric 
$\overline{\mathbb G} $ on ${\mathbb L}({\mathbb S}_{\epsilon}^3)$ (see equation (\ref{e:neutralmetric})).  The metric 
$\overline{\mathbb G}$ is of neutral signature and is locally conformally flat \cite{An}  \cite{Salvai}. 

Tangent hypersurfaces in ${\mathbb L}({\mathbb S}_{\epsilon}^3)$ 
have been studied studied using the neutral metric $\overline{\mathbb G}$ in \cite{gg1}. 
In particular, the tangent hypersurface of an embedded strictly convex 2-sphere is null, i.e., the unit normal vector field has zero length with 
respect to the neutral metric. Furthermore, the totally null planes form a pair of plane fiels on the tangent hypersurface that are contact. 

Regarding the Einstein metric ${\mathbb G}$ we show:

\vspace{0.1in}
\begin{Thm}\label{t:firstmaintheorem} Let $S$ be a smooth closed convex surface in ${\mathbb S}_{\epsilon}^3$. Then the tangent hypersurface ${\mathcal H}(S)$ is a Hopf hypersurface of $({\mathbb L}({\mathbb S}_{\epsilon}^3),{\mathbb J}',{\mathbb G})$. 
\end{Thm}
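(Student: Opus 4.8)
The plan is to reduce the Hopf condition to a statement about the integral curves of the structure vector field, and then to verify that statement in the explicit model of \cite{An}. Throughout I write $({\mathbb L},{\mathbb J}',{\mathbb G})$ for ${\mathbb L}({\mathbb S}_\epsilon^3)$ with the second (para-)complex structure and its Einstein metric, and I use that $({\mathbb G},{\mathbb J}')$ is (para-)Kähler, so that ${\mathbb J}'$ is parallel for the Levi-Civita connection $\bar\nabla$ of ${\mathbb G}$ and $({\mathbb J}')^2=-\mathrm{Id}$.

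First I would note that ${\mathcal H}(S)$ is ${\mathbb G}$-non-degenerate (one checks this in the model; if it were degenerate the Hopf condition would be empty), so it admits a ${\mathbb G}$-unit normal $N$. Put $\xi':=-{\mathbb J}'N$, which is tangent to ${\mathcal H}(S)$ since ${\mathbb G}({\mathbb J}'N,N)=0$, and let $A$ be the shape operator, so $\bar\nabla_XN=-AX$ is automatically tangent. Parallelism of ${\mathbb J}'$ then gives
$$
\bar\nabla_{\xi'}\xi'=\bar\nabla_{\xi'}\bigl(-{\mathbb J}'N\bigr)=-{\mathbb J}'\,\bar\nabla_{\xi'}N={\mathbb J}'(A\xi').
$$
Since ${\mathbb J}'\xi'=N$ and ${\mathbb J}'$ is invertible, $A\xi'$ is proportional to $\xi'$ exactly when $\bar\nabla_{\xi'}\xi'$ is proportional to $N$, i.e.\ exactly when the integral curves of $\xi'$ are geodesics of $\bigl({\mathcal H}(S),{\mathbb G}|_{{\mathcal H}(S)}\bigr)$. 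Equivalently — by the standard reformulation for real hypersurfaces in (para-)Kähler manifolds, see e.g.\ \cite{NR} — ${\mathcal H}(S)$ is Hopf iff $\iota_{\xi'}d\eta'=0$, where $\eta':={\mathbb G}(\xi',\,\cdot\,)|_{T{\mathcal H}(S)}$ is a defining form of one of the two totally ($\overline{\mathbb G}$-)null plane fields shown to be contact in \cite{gg1}; so Theorem~\ref{t:firstmaintheorem} asserts precisely that the structure vector field $\xi'$ is (a constant multiple of) the Reeb field of that contact form. I emphasize that the same computation with ${\mathbb J}$ in place of ${\mathbb J}'$ does not close in general: there the obstruction to $\nabla_\xi\xi=0$ is controlled by the principal curvatures of $S$ (vanishing precisely when $S$ is totally umbilic and non-flat), which is Theorem~\ref{t:secondmaintheorem}.

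It then remains to prove $\nabla_{\xi'}\xi'=0$, which I would establish by a direct computation in the coordinates of \cite{gk2}, \cite{gg1}: a point of ${\mathcal H}(S)$ is an oriented geodesic tangent to $S$ at a point $p$, the natural map ${\mathcal H}(S)\to S$ is a circle bundle, and one has an adapted frame in which the ``base'' directions carry the principal curvatures of $S$ and the ``fibre'' direction is the infinitesimal rotation of the geodesic about $p$. Using the explicit forms of ${\mathbb J}'$ and $\bar\nabla$ from \cite{An}, one writes $N$ and $\xi'$ in this frame, computes $\bar\nabla_{\xi'}\xi'$, and checks that its $T{\mathcal H}(S)$-component vanishes for all values of the principal curvatures. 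A useful geometric input for organising the calculation is that, for each $p\in S$, the fibre over $p$ consists of the oriented geodesics through $p$ tangent to the totally geodesic surface $\Pi_p\subset{\mathbb S}_\epsilon^3$ with $T_p\Pi_p=T_pS$; these form a complete, constant-speed geodesic of ${\mathbb L}(\Pi_p)$, and ${\mathbb L}(\Pi_p)$ sits inside ${\mathbb L}({\mathbb S}_\epsilon^3)$ as a totally geodesic ${\mathbb J}'$-complex curve — it is the fixed-point set of the isometry of $({\mathbb L},{\mathbb G})$ induced by the geodesic reflection of ${\mathbb S}_\epsilon^3$ in $\Pi_p$.

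I expect the main obstacle to be exactly this last verification. It is also where the hypotheses enter: convexity (and closedness) of $S$ is what guarantees that ${\mathcal H}(S)$ is a globally defined, ${\mathbb G}$-non-degenerate embedded hypersurface carrying the frame above, and it is here that ${\mathbb J}'$ behaves essentially differently from ${\mathbb J}$. The remaining technical nuisance is keeping track of the identification of $T_\gamma{\mathbb L}$ with the space of normal Jacobi fields along $\gamma$ as $\gamma$ ranges over ${\mathcal H}(S)$; as in \cite{gg1}, I would handle this by working throughout in the explicit model rather than invariantly.
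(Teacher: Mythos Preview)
Your reduction (Hopf $\Leftrightarrow$ the integral curves of $\xi'$ are $\mathcal H(S)$-geodesics, via $\bar\nabla_{\xi'}\xi'={\mathbb J}'A\xi'$) is correct and standard, but it is not the route the paper takes, and in your outline the actual content is still missing. The paper works directly in the wedge model: with $\bar\phi=\phi\wedge v$ and frame $(e_0:=d\bar\phi(\partial_\theta),\,d\bar\phi(e_1),\,d\bar\phi(e_2))$ it computes $\bar N=v\wedge N$, finds the shape operator and hence the principal directions $e_0,\,v_+,\,v_-$, and then uses the explicit identity ${\mathbb J}e_0={\mathbb J}'\bar N=-\xi'$ together with the ${\mathbb J}$-invariance of the $\alpha$-plane $\Pi_+=\mathrm{span}\{e_0,v_+\}$ to place $\xi'$ among the principal directions. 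In other words, the crucial point is a relation between the \emph{two} structures: $\xi'$ (built from ${\mathbb J}'$) is obtained from the fibre vector $e_0$ by applying ${\mathbb J}$, not ${\mathbb J}'$.

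This is where your proposal goes off track. First, in the model one has $\xi'\propto v\wedge v^{\perp}$, which is \emph{not} the fibre direction $e_0=\phi\wedge v^{\perp}$; so the fact that the fibre over $p$ is a geodesic of ${\mathbb L}(\Pi_p)$, while true, does not yield $\nabla_{\xi'}\xi'=0$. Second, your ``organising input'' contains a genuine error: the tangent space to ${\mathbb L}(\Pi_p)$ at $\phi\wedge v$ is $\mathrm{span}\{\phi\wedge v^{\perp},\,v\wedge v^{\perp}\}$, which is ${\mathbb J}$-invariant but \emph{not} ${\mathbb J}'$-invariant (indeed ${\mathbb J}'(\phi\wedge v^{\perp})=\pm\,\phi\wedge N$). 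So ${\mathbb L}(\Pi_p)$ is a totally geodesic ${\mathbb J}$-holomorphic curve, not a ${\mathbb J}'$-complex one, and this mix-up between ${\mathbb J}$ and ${\mathbb J}'$ is exactly what makes your heuristic inconclusive. If you want to salvage the geodesic-reduction approach, the honest path is to write $\xi'$ explicitly as a combination of $d\bar\phi(e_1),d\bar\phi(e_2),e_0$ (using convexity so that $\lambda_+=\lambda_1\cos^2\theta+\lambda_2\sin^2\theta\neq 0$) and compute $A\xi'$ directly from the shape-operator formulas; this is effectively the paper's computation and no longer a different argument.
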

\vspace{0.1in}

In the next secion we establish notation and preliminaries, while Section \ref{s:3} contains the proof of Theorem \ref{t:secondmaintheorem}. The proof of 
Theorem \ref{t:firstmaintheorem} is in Section \ref{s:4}.

\vspace{0.3in}

\section{Notation and Preliminaries}\label{s:2}

We adopt the notation of section 3.2 of \cite{gg1}, extended to  higher dimensions as in \cite{An}. 

Let ${\mathbb S}_{\epsilon}^{n+1}=\{x\in{\mathbb R}^{n+2}\; :\; \left<x,x\right>_{\epsilon}=1\}$ be the $(n+1)$-(pseudo)-sphere in the Euclidean space 
${\mathbb R}_{\epsilon}^{n+2}:=({\mathbb R}^{n+2},\left<.,.\right>_{\epsilon})$ for $n\geq2$. 
Note that ${\mathbb S}_{1}^{n+1}$ is the round $(n+1)$-sphere ${\mathbb S}^{n+1}$, while 
${\mathbb S}_{-1}^{n+1}$ is anti-isometric to the hyperbolic $(n+1)$-space ${\mathbb H}^{n+1}$. 

The space of oriented geodesics ${\mathbb L}({\mathbb S}_{\epsilon}^{n+1})\subset \Lambda^2({\mathbb R}^{n+2})$ of 
$({\mathbb S}_{\epsilon}^{n+1},g_{\epsilon})$ is $2n$-dimensional and ${\mathbb L}({\mathbb S}_{1}^{n+1})$ can be identified with the 
Grasmannian of oriented planes in ${\mathbb R}_1^{n+2}$, while ${\mathbb L}({\mathbb S}_{-1}^{n+1})$ can be identified with the Grasmannian 
of oriented planes in ${\mathbb R}_{-1}^{n+2}$ such that the induced metric is Lorentzian \cite{An}. 

Recall the complex (resp. paracomplex) structure ${\mathbb J}_{\epsilon}$ on ${\mathbb L}({\mathbb S}_{\epsilon}^{n+1})$ defined by:
\[
{\mathbb J}_{\epsilon}:T_{x\wedge y}{\mathbb L}({\mathbb S}_{\epsilon}^{n+1})\rightarrow :T_{x\wedge y}{\mathbb L}({\mathbb S}_{\epsilon}^{n+1}):x\wedge X+y\wedge Y\mapsto y\wedge X-x\wedge Y,
\]
and simply write ${\mathbb J}$ for ${\mathbb J}_{\epsilon}$. Finally, consider the $SO(n+2)$ (resp. $SO(1,n+1)$)-invariant Einstein metric ${\mathbb G}_{\epsilon}$, given by
\[
{\mathbb G}_{\epsilon}=\iota^{\star}\left<\left<.,.\right>\right>_{\epsilon},
\]
where $\left<\left<.,.\right>\right>_{\epsilon}$ is the flat metric of $\Lambda^2({\mathbb R}^{n+2})$. Then, $({\mathbb L}({\mathbb S}^{n+1}),{\mathbb J},{\mathbb G})$ (resp. $({\mathbb L}({\mathbb H}^{n+1}),{\mathbb J},{\mathbb G})$ ) is a (resp. para-) K\"ahler structure \cite{agk} \cite{An} \cite{gag1}.

The four-dimensional manifold ${\mathbb L}({\mathbb S}_{\epsilon}^3)$ enjoys other natural complex structure,
which is defined as follows: the orthogonal two-plane $(x\wedge y)^{\bot}$ is Riemannian and admits
a canonical orientation (that orientation compatible with the orientations of $x\wedge y$ and ${\mathbb R}^4$). Thus it enjoys a canonical complex structure $J'$. The following endomorphism 
\[
{\mathbb J}'(x\wedge X + y\wedge Y) := x\wedge (J'X) + y\wedge (J'Y ),
\]
defines another complex structure on ${\mathbb L}({\mathbb S}_{\epsilon}^3)$ that is compatible with ${\mathbb G}$. Thus, $({\mathbb L}({\mathbb S}_{\epsilon}^3), {\mathbb G}, {\mathbb J}')$ is another K\"ahler structure (see \cite{agk} \cite{An} \cite{CU}). Since ${\mathbb J}$ and ${\mathbb J}'$ commute, we may define the following metric on ${\mathbb L}({\mathbb S}_{\epsilon}^3)$:
\begin{equation}\label{e:neutralmetric}
\overline{\mathbb G}(\cdot,\cdot)=-\epsilon {\mathbb G}(\cdot,{\mathbb J}\circ {\mathbb J}'\cdot),
\end{equation}
which is of neutral signature and locally conformally flat.
\vspace{0.1in}

\begin{Def}
A {\it tangent hypersurface} ${\mathcal H}(\Sigma)$ over a hypersurface $\Sigma$ in ${\mathbb S}_{\epsilon}^{n+1}$ is the hypersurface  of ${\mathbb L}({\mathbb S}_{\epsilon}^{n+1})$ formed by the oriented geodesics in ${\mathbb S}_{\epsilon}^{n+1}$ tangent to $\Sigma$ at some point.
\end{Def}
\vspace{0.1in}

This was introduced for $n=2$ in the flat case in \cite{gk2} and the curved case in \cite{gg1}. In this dimension ${\mathcal H}(S)$ is 
$\overline{\mathbb G}$-null, i.e., the unit normal vector field is 
of zero length  with respect to the metric $\overline{\mathbb G}$. Furthermore, ${\mathcal H}(S)$ is locally a circle bundle over $S$, with projection 
$\pi: {\mathcal H}(S)\rightarrow  S$ and fibre generated by rotation about the normal to $S$. For further details and properties in this dimension, 
see \cite{gg1}.

\vspace{0.2in}

\section{Hopf Tangent Hypersurfaces}\label{s:3}

In this section we consider the conditions under which a tangent hypersurface is Hopf with respect to the canonical (para-)Kaehler
structure $({\mathbb J},{\mathbb G})$. 

We start with the following Lemma:

\begin{Lem}\label{l:lem}
Let $(e_1,\ldots,e_n)$ be an orthonormal basis of ${\mathbb R}^n$. Then, for every $v\in {\mathbb S}^{n-1}\subset {\mathbb R}^n$ there exist $\theta_1\in [0,2\pi)$ and $\theta_2,\ldots,\theta_{n-1}\in [-\pi/2,\pi/2]$, such that
\[
v=\cos\theta_1\ldots\cos\theta_{n-1}\,e_1+\sin\theta_1\cos\theta_2\ldots\cos\theta_{n-1}\,e_2+\sin\theta_2\cos\theta_3\ldots\cos\theta_{n-1}\,e_3+
\]
\[
\qquad\qquad\ldots+\sin\theta_{n-2}\cos\theta_{n-1}\,e_{n-1}+\sin\theta_{n-1}\,e_n.
\]
\end{Lem}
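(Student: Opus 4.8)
The plan is to prove this by induction on $n$, which amounts to exhibiting the standard hyperspherical (polar) coordinate parametrization of $S^{n-1}$, written here in a form nested from the last angle $\theta_{n-1}$ inward. First I would dispose of the base case $n=2$: here $S^1 \subset \mathbb{R}^2$, and every unit vector is $v = \cos\theta_1\, e_1 + \sin\theta_1\, e_2$ for a unique $\theta_1 \in [0,2\pi)$, which is exactly the claimed formula with no $\theta_2,\dots,\theta_{n-1}$ present.

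For the inductive step, suppose the statement holds for $\mathbb{R}^{n-1}$ with orthonormal basis $(e_1,\dots,e_{n-1})$. Given $v \in S^{n-1} \subset \mathbb{R}^n$, write $v = w + t\, e_n$ where $w \in \mathrm{span}(e_1,\dots,e_{n-1})$ and $t = \langle v, e_n\rangle \in [-1,1]$. Choose $\theta_{n-1} \in [-\pi/2,\pi/2]$ with $\sin\theta_{n-1} = t$, so that $|w| = \sqrt{1-t^2} = \cos\theta_{n-1} \geq 0$. If $\cos\theta_{n-1} \neq 0$, then $w/\cos\theta_{n-1}$ is a unit vector in $\mathbb{R}^{n-1}$, so by the inductive hypothesis there are angles $\theta_1 \in [0,2\pi)$ and $\theta_2,\dots,\theta_{n-2} \in [-\pi/2,\pi/2]$ with
\[
\frac{w}{\cos\theta_{n-1}} = \cos\theta_1\cdots\cos\theta_{n-2}\, e_1 + \sin\theta_1\cos\theta_2\cdots\cos\theta_{n-2}\, e_2 + \cdots + \sin\theta_{n-2}\, e_{n-1}.
\]
Multiplying through by $\cos\theta_{n-1}$ and adding $\sin\theta_{n-1}\, e_n$ gives precisely the asserted expansion of $v$. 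If $\cos\theta_{n-1} = 0$, then $w = 0$ and $v = \pm e_n = \sin\theta_{n-1}\, e_n$ with $\theta_{n-1} = \pm\pi/2$; here the remaining angles may be chosen arbitrarily (say all zero) and every term except the last carries a factor $\cos\theta_{n-1} = 0$, so the formula again holds.

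The only real issue to be careful about — and the closest thing to an obstacle — is the degenerate case $\cos\theta_{n-1} = 0$, where $w/\cos\theta_{n-1}$ is undefined and the inductive hypothesis cannot be applied directly; this is handled by the separate observation above that all but the last term vanish. Everything else is a routine verification that the nested products telescope correctly when one factor of $\cos\theta_{n-1}$ is distributed across the $(n-1)$-dimensional expansion. Note that uniqueness of the angles is not asserted in the statement (and indeed fails in the degenerate case and at the seam $\theta_1 = 0$), so I would not attempt to prove it; only existence is needed for the later application.
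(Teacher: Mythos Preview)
Your proof is correct and follows essentially the same approach as the paper: both peel off the last coordinate by setting $\langle v,e_n\rangle=\sin\theta_{n-1}$, handle the degenerate case $\cos\theta_{n-1}=0$ separately, and then normalise the remaining $(n-1)$-dimensional part and repeat. The only cosmetic difference is that you package the recursion as a formal induction on $n$, whereas the paper unwinds the same process iteratively, extracting $\theta_{n-1},\theta_{n-2},\ldots$ one at a time down to the final pair of coordinates handled by $\theta_1\in[0,2\pi)$.
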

\begin{proof}
Since $\left<e_i,e_j\right>=\delta_{ij}$, every vector $v$ in ${\mathbb R}^n$ satisfies
\[
\left<v,v\right>=\left<v,e_1\right>\left<v,e_1\right>+\ldots+\left<v,e_n\right>\left<v,e_n\right>,
\]
and the fact that $v\in {\mathbb S}^{n-1}$ yields,
\begin{equation}\label{e:component}
\left<v,e_1\right>^2+\ldots+\left<v,e_n\right>^2=1.
\end{equation}
Then, 
\[
|\left<v,e_n\right>|\leq 1,
\]
Thus, there exists $\theta_{n-1}\in [-\pi/2,\pi/2]$ such that 
\begin{equation}\label{e:lastcomponent}
\left<v,e_n\right>=\sin\theta_{n-1}.
\end{equation}
Using (\ref{e:lastcomponent}), we get,
\begin{equation}\label{e:lastcomponent21}
\left<v,e_1\right>^2+\ldots+\left<v,e_{n-1}\right>^2=\cos^2\theta_{n-1}.
\end{equation}
If $|\theta_{n-1}|=\pi/2$, we have
\[
\left<v,e_1\right>=\ldots=\left<v,e_{n-1}\right>=0,
\]
and choosing $\theta_1=\ldots=\theta_{n-2}=0$, we obtain $v=e_n$. Similar argument shows that if $|\theta_k|=\pi/2$ for some $k$, then $\theta_i=0$ for all $i<k$.

Suppose that $|\theta_k|\neq\pi/2$ for all $k$. Following (\ref{e:lastcomponent21}) we have
\[
\left(\frac{\left<v,e_1\right>}{\cos\theta_{n-1}}\right)^2+\ldots+\left(\frac{\left<v,e_{n-1}\right>}{\cos\theta_{n-1}}\right)^2=1.
\]
We then have 
\[
\left|\frac{\left<v,e_{n-1}\right>}{\cos\theta_{n-1}}\right|\leq 1
\]
and so there exists $\theta_{n-2}\in [-\pi/2,\pi/2]$ such that 
\[
\frac{\left<v,e_{n-1}\right>}{\cos\theta_{n-1}}=\sin\theta_{n-2}.
\]
It follows,
\begin{equation}\label{e:slastcomponent}
\left<v,e_{n-1}\right>=\sin\theta_{n-2}\cos\theta_{n-1}.
\end{equation}
From (\ref{e:slastcomponent}), we obtain
\[
\left(\frac{\left<v,e_1\right>}{\cos\theta_{n-1}}\right)^2+\ldots+\left(\frac{\left<v,e_{n-2}\right>}{\cos\theta_{n-1}}\right)^2=\cos^2\theta_{n-2},
\]
which yields,
\[
\left(\frac{\left<v,e_1\right>}{\cos\theta_{n-2}\cos\theta_{n-1}}\right)^2+\ldots+\left(\frac{\left<v,e_{n-2}\right>}{\cos\theta_{n-2}\cos\theta_{n-1}}\right)^2=1,
\]
and hence there exists $\theta_{n-3}\in [-\pi/2,\pi/2]$ such that 
\[
\frac{\left<v,e_{n-1}\right>}{\cos\theta_{n-2}\cos\theta_{n-1}}=\sin\theta_{n-3}.
\]
Equivalently,
\[
\left<v,e_{n-2}\right>=\sin\theta_{n-3}\cos\theta_{n-2}\cos\theta_{n-1}.
\]
Applying the same process we obtain angles $\theta_2,\ldots,\theta_{n-1}\in[-\pi/2,\pi/2]$, satisfying 
\[
\left<v,e_k\right>=\sin\theta_{k-1}\cos\theta_k\ldots\cos\theta_{n-1},\quad k=3,\ldots, n.
\]
We then have,
\[
\left(\frac{\left<v,e_1\right>}{\cos\theta_2\ldots\cos\theta_{n-1}}\right)^2+\left(\frac{\left<v,e_2\right>}{\cos\theta_2\ldots\cos\theta_{n-1}}\right)^2=1.
\]
Thus, there exists $\theta\in[0,2\pi)$, such that 
\[
\frac{\left<v,e_1\right>}{\cos\theta_2\ldots\cos\theta_{n-1}}=\cos\theta_1\qquad\qquad \frac{\left<v,e_2\right>}{\cos\theta_2\ldots\cos\theta_{n-1}}=\sin\theta_1,
\]
and the lemma follows.
\end{proof}

\begin{Def}Let $(M,g)$ be a smooth manifold and $\Sigma$ be a hypersurface in $M$. A point $x\in\Sigma$ is said to be {\it umbilic} if the second fundamental form $h$ is proportional to the first fundamental form, i.e. there exists a constant $\lambda$ such that 
\[
h(X,Y)=\lambda g(X,Y).
\]
A hypersurface is said to be {\it totally umbilic} if all its points are umbilic. In particular, for every point in a totally umbilic hypersurface all principal curvatures are equal.
\end{Def}

\vspace{0.1in}

\noindent {\it Proof of Theorem} \ref{t:secondmaintheorem}:
Any vector field $X$ in $\Sigma$ is identified with $d\phi(X)$ and let $e_1,\ldots,e_n$ be the principal directions of $\phi$ with corresponding principal curvatures $\lambda_1,\ldots,\lambda_n$.
Using Lemma \ref{l:lem}, the tangent hypersurface ${\mathcal H}(\Sigma)$ can be locally parametrized by $$\bar\phi:\Sigma\times{\mathbb S}^{n-1}\rightarrow {\mathbb L}({\mathbb S}^{n+1}):(x,\theta_1,\ldots,\theta_{n-1})\mapsto\phi(x)\wedge v(x,\theta_1,\ldots,\theta_{n-1}),$$ where,
\[
v=\cos\theta_1\ldots\cos\theta_{n-1}e_1(x)+\sin\theta_1\cos\theta_2\ldots\cos\theta_{n-1}e_2(x)+
\sin\theta_2\cos\theta_3\ldots\cos\theta_{n-1}e_3(x)
\]
\[
\qquad\qquad+\ldots+\sin\theta_{n-2}\cos\theta_{n-1}\,e_{n-1}(x)+\sin\theta_{n-1}\,e_n(x)
\]
For $k=1,\ldots,n-1$ define,
\[
v_k=\frac{\partial_{\theta_k}v}{|\partial_{\theta_k}v|}.
\]
Then, 
\[
v_k=-\cos\theta_1\ldots\cos\theta_{k-1}\sin\theta_k\,e_1-\sin\theta_1\cos\theta_2\ldots\cos\theta_{k-1}\sin\theta_k\,e_2-
\]
\[
\qquad\qquad-\sin\theta_2\cos\theta_3\ldots\cos\theta_{k-1}\sin\theta_k\,e_3-\sin\theta_3\cos\theta_4\ldots\cos\theta_{k-1}\sin\theta_k\,e_4-
\]
\[
\qquad\qquad-\sin\theta_{k-2}\cos\theta_{k-1}\sin\theta_k\,e_{k-1}-\sin\theta_{k-1}\sin\theta_k\,e_k+\cos\theta_k\,e_{k+1}.
\]
Setting $v_n:=v$, one can show that $\left<v_i,v_j\right>=\delta_{ij}$.

The tangent space $T_{\phi\wedge v}{\mathbb L}({\mathbb S}_{\epsilon}^{n+1})$ on the oriented plane $\phi\wedge v$ in ${\mathbb R}^{n+2}$ is identified with the space of the vector fields that are of the form 
\[
\phi\wedge X+v\wedge Y,
\]
where $X,Y\in (\phi\wedge v)^{\perp}=\mbox{span}\{N,v_1,\ldots,v_{n-1}\}$. Using the (para-) complex structure $J$ defined by $J\phi=v$ and $Jv=-\epsilon\phi$, the (para-) complex structure ${\mathbb J}$ on ${\mathbb L}({\mathbb S}_{\epsilon}^{n+1})$ is defined as follows,
\[
{\mathbb J}(\phi\wedge X+v\wedge Y)=(J\phi)\wedge X+(Jv)\wedge Y=-\epsilon\phi\wedge Y+v\wedge X.
\]
Consider the matrix $(g_{ij})\in SO(n)$, given by $v_k=\displaystyle\sum_{l=1}^n g_{kl}e_l$ and denote the inverse matrix by $(g^{ij})$. It then follows, 
\begin{eqnarray}
d\bar\phi(e_k)&=&d(\phi\wedge v)(e_k)\nonumber\\
&=&e_k\wedge v+\phi\wedge \overline\nabla_{e_k}v\nonumber\\
&=&\sum_{l=1}^{n-1}g^{kl}v_l\wedge v+\phi\wedge \overline\nabla_{e_k}v.\nonumber
\end{eqnarray}
A brief computations gives,
\[
\overline\nabla_{e_k}v=\sum_{l=1}^{n-1}\sum_{s=1}^{n}g^{ks}\left< \overline\nabla_{v_s}v,v_l\right>v_l+\left< \overline\nabla_{e_k}v,\phi\right>\phi+\lambda_kg_{nk} N.
\]
Therefore, the tangent bundle $T{\mathcal H}(\Sigma)$ is generated by the vector fields,
\begin{equation}\label{derivativegeneral}
d\bar\phi(e_k)=\sum_{l=1}^{n-1}g^{kl}v_l\wedge v+\sum_{l=1}^{n-1}\sum_{s=1}^{n}g^{ks}\left< \overline\nabla_{v_s}v,v_l\right>\phi\wedge v_l+\lambda_kg^{kn}\phi\wedge N.
\end{equation}
The unit normal vector field $\bar N$ of ${\mathcal H}(\Sigma)$ in ${\mathbb L}({\mathbb S}^{n+1})$ is given by,
\[
\bar N=v\wedge N.
\]
The structure vector field $\xi=-{\mathbb J}\bar N$ is,
\[
\xi=\phi\wedge N.
\]
Let $D,\overline{D}$ be the Levi-Civita connection of $\left<\left<.,.\right>\right>$ and ${\mathbb G}$, respectively. Then,
\begin{eqnarray}
\overline{D}_{d\bar\phi(e_k)}\bar N&=&\overline{D}_{d\bar\phi(e_k)}(\phi\wedge N)\nonumber \\
&=&\sum_{l=1}^{n-1}\left<\overline{\nabla}_{e_k}v,v_l\right>v_l\wedge N+\left<\overline{\nabla}_{e_k}v,\phi\right>\phi\wedge N+\lambda_k\sum_{l=1}^{n-1}g^{kl}v_l\wedge v.\nonumber
\end{eqnarray}
Thus,
\begin{equation}\label{e:shapeop0}
D_{d\bar\phi(e_k)}\bar N=-g_{nk}\phi\wedge N+\lambda_k\sum_{l=1}^{n-1}g^{kl}v_l\wedge v.
\end{equation}
Similarly,
\[
\overline{D}_{d\bar\phi(\partial/\partial\theta_k)}\bar N=\overline{D}_{d\bar\phi(\partial/\partial\theta_k)}(v\wedge N)=(\partial_{\theta_k} v)\wedge N,
\]
which gives,
\begin{equation}\label{e:shapeop}
D_{d\bar\phi(\partial/\partial\theta_k)}\bar N=0.
\end{equation}
If $A$ stands for the shape operator of ${\mathcal H}(\Sigma)$ in ${\mathbb L}({\mathbb S}^{n+1})$, the relations (\ref{e:shapeop0}) and (\ref{e:shapeop}) give,
\begin{eqnarray}
A(d\bar\phi(e_k))&=&-g_{nk}\phi\wedge N+\lambda_k\sum_{l=1}^{n-1}g^{kl}v_l\wedge v\label{e:shap0}\\
A(d\bar\phi(\partial/\partial\theta_k))&=&0.\label{e:shap1}
\end{eqnarray}
Suppose that all principal curvatures $\lambda_1,\ldots,\lambda_n$ are all equal to $\lambda$, where $\lambda(x)\neq 0$ for all $x\in\Sigma$. Using (\ref{derivativegeneral}) and the fact that  we have, 
\begin{eqnarray}
\sum_{k=1}^ng_{nk}d\bar\phi(e_k)&=&\sum_{k=1}^n\sum_{l=1}^{n-1}g_{nk}g^{kl}v_l\wedge v+\sum_{k=1}^n\sum_{l=1}^{n-1}\sum_{s=1}^{n}g_{nk}g^{ks}\left< \overline\nabla_{v_s}v,v_l\right>\phi\wedge v_l\nonumber \\
&&\qquad\qquad\qquad+\sum_{k=1}^n\lambda_kg_{nk}g^{kn}\phi\wedge N\nonumber \\
&=&\sum_{l=1}^{n-1}\left< \overline\nabla_{v}v,v_l\right>\phi\wedge v_l+\sum_{k=1}^n\lambda g_{nk}g^{kn}\phi\wedge N\nonumber \\
&=&\sum_{l=1}^{n-1}\left< \overline\nabla_{v}v,v_l\right>\phi\wedge v_l+\lambda \xi.\nonumber 
\end{eqnarray}
The expression,
\[
\phi\wedge v_k=\frac{d\phi(\partial/\partial\theta_k)}{|\partial_{\theta_k}v|},
\]
gives,
\[
\sum_{k=1}^ng_{nk}d\bar\phi(e_k)=\sum_{k=1}^{n-1}\frac{\left< \overline\nabla_{v}v,v_k\right>}{|\partial_{\theta_k}v|}
d\bar\phi (\partial/\partial\theta_k)+\lambda\xi.
\]
Hence,
\[
\xi=\lambda^{-1}\sum_{k=1}^n\left(g_{nk}d\bar\phi(e_k)-\frac{\left< \overline\nabla_{v}v,v_k\right>}{|\partial_{\theta_k}v|}
d\bar\phi (\partial/\partial\theta_k)\right).
\]
Using (\ref{e:shap0}) and (\ref{e:shap1}), we finally get
\begin{eqnarray}
A\xi&=&\lambda^{-1}\sum_{k=1}^n\left(g_{nk}A(d\bar\phi(e_k))-\frac{\left< \overline\nabla_{v}v,v_k\right>}{|\partial_{\theta_k}v|}A(d\bar\phi (\partial/\partial\theta_k))\right)\nonumber\\
&=&\lambda^{-1}\sum_{k=1}^n\left(-g_{nk}g_{nk}\phi\wedge N+\lambda_k\sum_{l=1}^{n-1}g_{nk}g^{kl}v_l\wedge v\right)\nonumber \\
&=&-\lambda^{-1}\left(\sum_{k=1}^ng_{nk}^2\right)\xi\nonumber \\
&=&-\lambda^{-1}\xi,\nonumber 
\end{eqnarray}
which shows that ${\mathcal H}(\Sigma)$ is a Hopf hypersurface.

Suppose that ${\mathcal H}(\Sigma)$ is Hopf with respect to $({\mathbb G}, {\mathbb J})$. Assuming that $\phi$ is not totally umbilic, consider the case where the principal curvatures $\lambda_k$ are all equal to $\lambda$ except $\lambda_{k_0}\neq\lambda$. A brief computation gives,
\[
\left(\sum_{k=1}^n\lambda_k g_{nk}g^{kn}\right)A\xi=\xi+(\lambda_s-\lambda)g_{ns}\left(\sum_{l=1}^{n-1}g^{sl}v_l\right)\wedge v,
\]
and shows that ${\mathcal H}(\Sigma)$ is not Hopf. Similar arguments can be used for the cases where two or more principal curvatures differ and the 
Theorem follows. $\square$

\vspace{0.2in}

\section{The Special Case of Dimension 3}\label{s:4}

As mentioned in the introduction, 3 dimensional non-flat space forms are unusual in that their exists a second complex structure ${\mathbb J}'$ on 
the space of oriented geodesics. In this section we consider the conditions under which a tangent hypersurface is Hopf with respect to the Kaehler
structure $({\mathbb J}',{\mathbb G})$. 

Using the terminology introduced in Section \ref{s:2} for dimension 3, we now prove Theorem  \ref{t:firstmaintheorem}:

\vspace{0.1in}

\noindent {\it Proof of Theorem} \ref{t:firstmaintheorem}:
Let $\phi:S\rightarrow {\mathbb S}_{\epsilon}^3$ be an embedding of a closed convex surface $S$ in ${\mathbb S}_{\epsilon}^3$ and let $(e_1,e_2)$ be the principal directions with corresponding pricipal curvatures $\lambda_1,\lambda_2$. Let $N$ be the unit normal vector field along the surface $\phi(S)$ such that $(\phi,e_1,e_2,N)$ is an oriented orthonormal frame of ${\mathbb R}^4$. For $\theta\in {\mathbb S}^1$, define the following tangential vector fields
\[
v(x,\theta)=\cos\theta\,e_1(x)+\sin\theta\, e_2(x)\quad\mbox{and}\quad v^{\bot}(x,\theta)=-\sin\theta\,e_1(x)+\cos\theta\, e_2(x)
\]
The tangent hypersurface ${\mathcal H}(S)$ over $S$  is locally parametrised by
\begin{eqnarray}
\bar\phi:S\times {\mathbb S}^1&\rightarrow& {\mathbb L}({\mathbb S}_{\epsilon}^3)\nonumber\\
(x,\theta)&\mapsto& \phi(x)\wedge v(x,\theta)\nonumber
\end{eqnarray}
Let $\xi'$ be the structure vector field of ${\mathcal H}(S)$ with respect to 
$({\mathbb J}',{\mathbb G})$, that is, $\xi'=-{\mathbb J}'\bar N$. 

Considering the principal directions $(e_1,e_2)$ with pricipal curvatures $\lambda_1,\lambda_2$, the derivative of $\bar\phi$ is given by:
\begin{eqnarray}
d\bar\phi(e_1)&=&v_1\,\phi\wedge v^{\bot}+\lambda_1\cos\theta\,\phi\wedge N+\sin\theta\, v\wedge v^{\bot}\nonumber \\
d\bar\phi(e_2)&=&v_2\,\phi\wedge v^{\bot}+\lambda_2\sin\theta\,\phi\wedge N-\cos\theta\, v\wedge v^{\bot}\nonumber \\
d\bar\phi(\partial/\partial\theta)&=&\phi\wedge v^{\bot},\nonumber 
\end{eqnarray}
for some smooth functions $v_1$ and $v_2$. Clearly, ${\mathcal H}$ is non-degenerate, with respect to ${\mathbb G}$, and the orthonormal normal vector field $\bar N$ is given by
\[
\bar N=v\wedge N.
\]
Let $\bar D, D$ be the Levi-Civita connections of $\left<\left<.,.\right>\right>_{\epsilon}$ and ${\mathbb G}$, respectively. Denote by $A$ and $h$ the shape operator and the second fundamental form of $\bar\phi$ and let $\bar h$ be the second fundamental form of the inclusion map
$\iota:{\mathbb L}({\mathbb S}_{\epsilon}^3)\hookrightarrow \Lambda^2({\mathbb R}^4)$. Note that for any vector fields $X,Y$ of ${\mathcal H}(S)$, we have:
\[
{\mathbb G}(h(X,Y),\bar N)={\mathbb G}(AX,Y).
\]
It follows,
\begin{eqnarray}
-\bar D_{d\bar\phi(e_1)}\bar N
&=& -v_1\,v^{\bot}\wedge N +\cos\theta\,\phi\wedge N-\lambda_1\sin\theta\, v\wedge v^{\bot}\nonumber 
\end{eqnarray}
Now, 
\begin{eqnarray}
A(d\bar\phi(e_1))
&=&-\bar D_{d\bar\phi(e_1)}\bar N+\bar h(d\bar\phi(e_1),\bar N),\nonumber 
\end{eqnarray}
which yields,
\begin{equation}\label{e:shapefore1}
A(d\bar\phi(e_1))= \cos\theta\,\phi\wedge N-\lambda_1\sin\theta\, v\wedge v^{\bot}.
\end{equation}
Similarly we get,
\begin{equation}\label{e:shapefore2}
A(d\bar\phi(e_2))= \sin\theta\,\phi\wedge N+\lambda_2\cos\theta\, v\wedge v^{\bot}\qquad A(d\bar\phi(\partial/\partial\theta))= 0.
\end{equation}
Using (\ref{e:shapefore1}), we have
\begin{eqnarray}
{\mathbb G}(h(d\bar\phi(e_1),d\bar\phi(e_1)),\bar N)
&=& \lambda_1\cos 2\theta.\nonumber 
\end{eqnarray}
Analogously we have,
\[
{\mathbb G}(h(d\bar\phi(e_1),d\bar\phi(e_2)),\bar N)={\mathbb G}(h(d\bar\phi(e_2),d\bar\phi(e_1)),\bar N)= H\sin 2\theta,
\]
\[
{\mathbb G}(h(d\bar\phi(e_2),d\bar\phi(e_2)),\bar N)=- \lambda_2\cos 2\theta.
\]
and 
\[
{\mathbb G}(h(d\bar\phi(e_1),d\bar\phi(\partial/\partial\theta)),\bar N)={\mathbb G}(h(d\bar\phi(e_2),d\bar\phi(\partial/\partial\theta)),\bar N)= 0,
\]
\[
{\mathbb G}(h(d\bar\phi(\partial/\partial\theta,d\bar\phi(\partial/\partial\theta)),\bar N)= 0.
\]
 In terms of $(e_0:=d\bar\phi(\partial/\partial\theta), d\bar\phi(e_1),d\bar\phi(e_2))$, the second fundamental form $h$ can be expressed by the following symmetric matrix
\[
h=\begin{pmatrix}0 & 0 & 0 \\ 0 & \lambda_1\cos 2\theta & H\sin 2\theta \\ 0 & H \sin 2\theta & -\lambda_2\cos 2\theta\end{pmatrix}
\]
The principal curvatures are the eigenvalues of $h$, which are $0, \lambda_+$ and $\lambda_-$, where
\[
\lambda_{+}=\lambda_1\cos^2\theta+\lambda_2\sin^2\theta\qquad \lambda_{-}=-\lambda_1\sin^2\theta-\lambda_2\cos^2\theta,
\]
with corresponding principal directions $e_0,v_+$ and $v_-$. Then,
\begin{eqnarray}
v_+&=& \cos\theta\,d\bar\phi(e_1)+\sin\theta\,d\bar\phi(e_2),\nonumber
\end{eqnarray}
and thus,
\[
v_+=\left<\nabla_v v,v^{\perp}\right>\,\phi\wedge v^{\perp}+(\lambda_1\cos^2\theta+\lambda_2\sin^2\theta)\;\phi\wedge N.
\]
The fact that $S$ is closed and convex implies that
\[
\lambda_+\lambda_-<0,
\]
and $\{v_+,e_0\}$ are linearly independent. Thus, the principal directions $e_0$ and $v_+$ span the $\alpha$-plane $\Pi_+$ \cite{gg1}, that is,
\[
\Pi_+=\mbox{span}\{e_0,v_+\}.
\]
It can be easily proved that
\begin{equation}\label{e:relationofnormals}
{\mathbb J}e_0={\mathbb J}'\bar N=-\xi'.
\end{equation}
Since ${\mathbb J}\Pi_+=\Pi_+$, it then follows that $\xi'\in \Pi_+$ and thus $\xi'$ is a principal direction.  Hence, ${\mathcal H}(S)$ is a Hopf hypersurface of $({\mathbb L}({\mathbb S}^3_\epsilon), {\mathbb J}',{\mathbb G})$. $\square$

\end{document}